\newtheorem{Lemma}{Lemma}[section]
\newtheorem{Assumption}{Assumption}[section]
\newtheorem{Problem}{Problem}[section]
\newtheorem{Remark}{Remark}[section]
\newtheorem{theorem}{Theorem}
\newcommand{\m}[1]{\mathbf{#1}}
\newcommand{\mc}[1]{\mathcal{#1}}
\newcommand{\mb}[1]{\mathbb{#1}}
\newcommand*{\tran}{^{\mkern-1.5mu\mathsf{T}}}
\title{\LARGE \bf
Pointing consensus for rooted out-branching graphs
}
\author{Minh Hoang Trinh$^{*}$, Daniel Zelazo$^{\dagger}$, Quoc Van Tran$^{*}$,  and Hyo-Sung Ahn$^{*}$
\thanks{$^{*}$School of Mechanical Engineering, Gwangju Institute of Science and Technology (GIST), Gwangju, Republic of Korea. Emails: \{trinhhoangminh,tranvanquoc,hyosung\}@gist.ac.kr}
\thanks{${}^{\dagger}$Faculty of Aerospace Engineering, Technion - Israel Institute of Technology, Haifa 32000, Israel. E-mail: dzelazo@technion.ac.il}
}
\begin{document}

\maketitle

\begin{abstract}
Given a network of multiple agents, the \emph{pointing consensus problem} asks all agents to point toward a common target. This paper proposes a simple method to solve the pointing consensus problem in the plane. In our formulation, each agent does not know its own position, but has information about its own heading vector expressed in a common coordinate frame and some desired relative angles to the neighbors. By exchanging the heading vectors via a communication network described by a rooted out-branching graph and controlling the angle between the heading vectors, we show that all agents' heading vectors asymptotically point towards the same target for almost all initial conditions. Simulations are provided to validate the effectiveness of the proposed method.
\end{abstract}

\section{INTRODUCTION}
Recently, there has been a large amount of research on the consensus algorithm and its applications. Given $n$ agents having different initial state values, by exchanging and updating the states based on the weighted sum of differences, all agents' states  eventually  reach a same value\cite{MesbahiEgerstedt}. The states of the agents could be auxiliarry variables used for decision and control tasks, or physical variables such as positions or velocities in the rendezvous and formation control problems\cite{olfati2007consensuspieee}. 

Unlike the usual consensus algorithm in the literature, the pointing consensus problem requires all agents in a network to direct their heading vectors to a common point in the space. This problem is motivated from applications in camera networks, satellite formations, and antenna arrays. For example, pointing consensus is important in coordinating multiple collectors and combiner spacecrafts in synthetic aperture radars (SAR)\footnote{https://dst.jpl.nasa.gov/control/} for space missions such as earth observation and studying evolution of black holes or other planets\cite{moreira2013tutorial,krieger2009earth}.

In the literature, there are not many works studying control strategies to solve the pointing consensus problem. The authors in \cite{zhang2014distributed} proposed a distributed concurrent targeting control strategy for linear arrays of point sources. The proposed control strategy in \cite{zhang2014distributed} relies on two main assumptions: (i) the agents' positions are collinearly located on a line in a two-dimensional space, and (ii) at the beginning, two agents at two ends of the line already pointed toward the target. However, these assumptions are quite strict when applied to satellite formations since satellites usually do not line-up perfectly. 

In this paper, we firstly formulate a different framework to study the pointing consensus problem. In our setup, each agent can be positioned freely in the plane and can control its heading direction around its position. The position of each agent and the target are not given, but all agents' local reference frames are aligned. The agent has information about its heading vector and some desired relative angles with its neighbors. Further, each agent can receive  the heading vectors from its neighboring agents. The inter-agent communication is described by a fixed and rooted out-branching graph. The information of the common target point is given to each agent in the form of a  desired heading direction and some subtended angles between the heading vectors. More specifically, the agent at the root of the communication graph knows the direction to the target. The other agents know some relative angles that their heading needs to achieve with regard to its neighbors' heading vectors such that if all these angles are satisfied, their heading vectors will target a common point. Secondly, we propose a decentralized control strategy for all agents to target a desired common point. Since the graph is rooted out-branching, the dynamics of the $n$-agent system has a cascade structure. Using notions of almost-global input-to-state stability, we show that all agents' headings will point towards the same target point for almost all initial conditions under the proposed control strategy. Thus, comparing with \cite{zhang2014distributed}, the control strategy in this paper relaxes the assumption on collinearity of all agents and further does not require two agents to specify the target from beginning. Finally, we provide a numerical simulation of a six-agent system to illustrate the control strategy. Since the heading direction of each camera can be modeled as a unit vector, there is an interesting link between the pointing consensus problem with the bearing-only navigation\cite{Loizou2007,Trinh2016CEP} and bearing-based formation control/network localization problems \cite{Bishop2014,zhao2015tac,zhao2016aut} in the literature. It is also worth noting that performing a consensus on the agents' heading vectors leads to the orientation alignment/attitude synchronization problem\cite{ren2007distributed}, or more generally, consensus on nonlinear manifolds \cite{sarlette2009consensus}. In order to solve the pointing consensus problem, beside the local heading directions, we need some relative information between the agents' positions and a common pointing target. 

The rest of this paper is organized as follows. In section \ref{section:2}, the pointing consensus problem is formulated. The control strategy is proposed, analyzed, and discussed in section \ref{section:3}. Section \ref{section:4} provides a simulation result of a six-agent system. Finally, we summarize the paper and discuss further research directions in section \ref{section:5}. 
\subsection{Notations and Preliminaries}
In this paper, lower-case characters are used to denote scalars, while bold-font lower-case (capital) letters and calligraphic letters denote vectors (matrices) and sets, respectively. The rotation matrix 
\begin{equation*}
\m{R}(\alpha) = \begin{bmatrix}
\cos \alpha & -\sin\alpha\\
\sin\alpha & \cos\alpha
\end{bmatrix},
\end{equation*}
rotates points in the plane counterclockwise through an angle $\alpha$ about the origin of the coordinate system. Given a vector $\m{v} \in \mb{R}^2$, the result of rotating $\m{v}$ by an angle $\alpha$ is $\m{R}(\alpha) \m{v}$. For $2 \times 2$ rotation matrix, we have the following properties: $\m{R}(\alpha)^{-1} = \m{R}(\alpha)\tran$, $\|\m{R}(\alpha)\| = 1$, and $\m{R}(\alpha_1)\m{R}(\alpha_2) = \m{R}(\alpha_2)\m{R}(\alpha_1),~\forall \alpha_1, \alpha_2 \in \mb{R}$.

Let $\mc{G} = (\mc{V},\mc{E})$ denote a directed graph with the vertex set $\mc{V} = \{1, \ldots, n\}$ and the edge set $\mc{E} \subset \mc{V} \times \mc{V}$. A directed edge $(j,i)$ in the graph describes that $i$ receives information from $j$ and not vice verse. The in-neighbor set of a vertex $i$ is defined as $\mc{N}_i = \{ j \in \mc{V}: (j, i) \in \mc{E}\}$.  A directed path is a sequence of vertices $i_1i_2\ldots i_p$ such that $(i_l, i_{l+1}) \in \mc{E}$. A directed cycle is a directed path having the same start and end vertices, i.e, $i_1 \equiv i_p$. A directed acyclic graph is a graph without any directed cycle. If there exists a vertex (called the root) such that for any vertex $i$ in the graph, there exists a directed path from the root to this vertex $i$, the graph is called rooted out-branching.

\section{PROBLEM FORMULATION}
\label{section:2}
\begin{figure}[tb]
\centering
\includegraphics[height = 2.5cm]{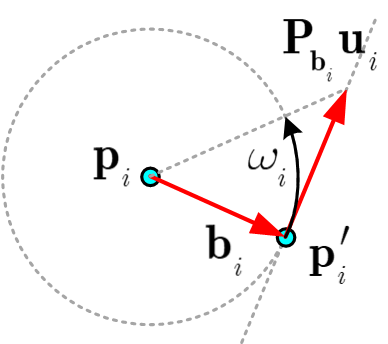}
\caption{The rotational velocity $\omega_i$ of the camera $i$'s heading direction can be equivalently represented as the velocity $\m{u}_i = \m{P}_{\m{b}_i}\m{u}_i$ of the head point $\m{p}_i'$.}
\label{fig:model}
\end{figure}

Consider a network of $n$ agents in a two-dimensional ambient space, each agent $i$ $(i=1, \ldots, n)$ is located at a fixed position $\m{p}_i \in \mb{R}^2$ (i.e., $\dot{\m{p}}_i = \m{0}$). The agent $i$ has a heading direction described by a unit vector $\m{b}_i \in \mb{R}^2$. 
Suppose that each agent can control its own heading direction by rotating its heading direction around the point $\m{p}_i$ with an angular velocity $\omega_i$, as illustrated in Fig.~\ref{fig:model}. Define $\m{p}_i' = \m{p}_i + \m{b}_i$. It can be seen from Fig.~\ref{fig:model} that the rotational motion of the heading direction is equivalent to the motion of the point $\m{p}_i'$, which is perpendicular to the heading direction $\m{b}_i$. Let $\m{P}_{\m{b}_i}:= \m{I}_2 - \m{b}_i \m{b}_i\tran$ denote the orthogonal projection matrix corresponding to $\m{b}_i$. Note that $\m{P}_{\m{b}_i} = \m{P}_{\m{b}_i}\tran = \m{P}_{\m{b}_i}^2$. Further, $\m{P}_{\m{b}_i}$ is positive semidefinite, and $\mc{N}(\m{P}_{\m{b}_i}) = \text{span}(\m{b}_i)$ \cite{zhao2015tac}. The control effort to change $\m{b}_i$ can be given as
\begin{equation}
\dot{\m{p}}_i' = \m{P}_{\m{b}_i} \m{u}_i,
\end{equation}
where $\m{u}_i \in \mb{R}^2$ will be designed later. Then, we have
\begin{equation} \label{eq:bearing_dot}
\dot{\m{b}}_i = \m{P}_{\m{b}_i} \m{u}_i.
\end{equation}
Note that a relationship between $\omega_i$ and $\m{u}_i$ is given as follows $\| \m{P}_{\m{b}_i}\m{u}_i \| = |\omega_i| \|\m{b}_i\| = |\omega_i|$. 
\begin{figure}
\centering
\includegraphics[height = 3.86cm]{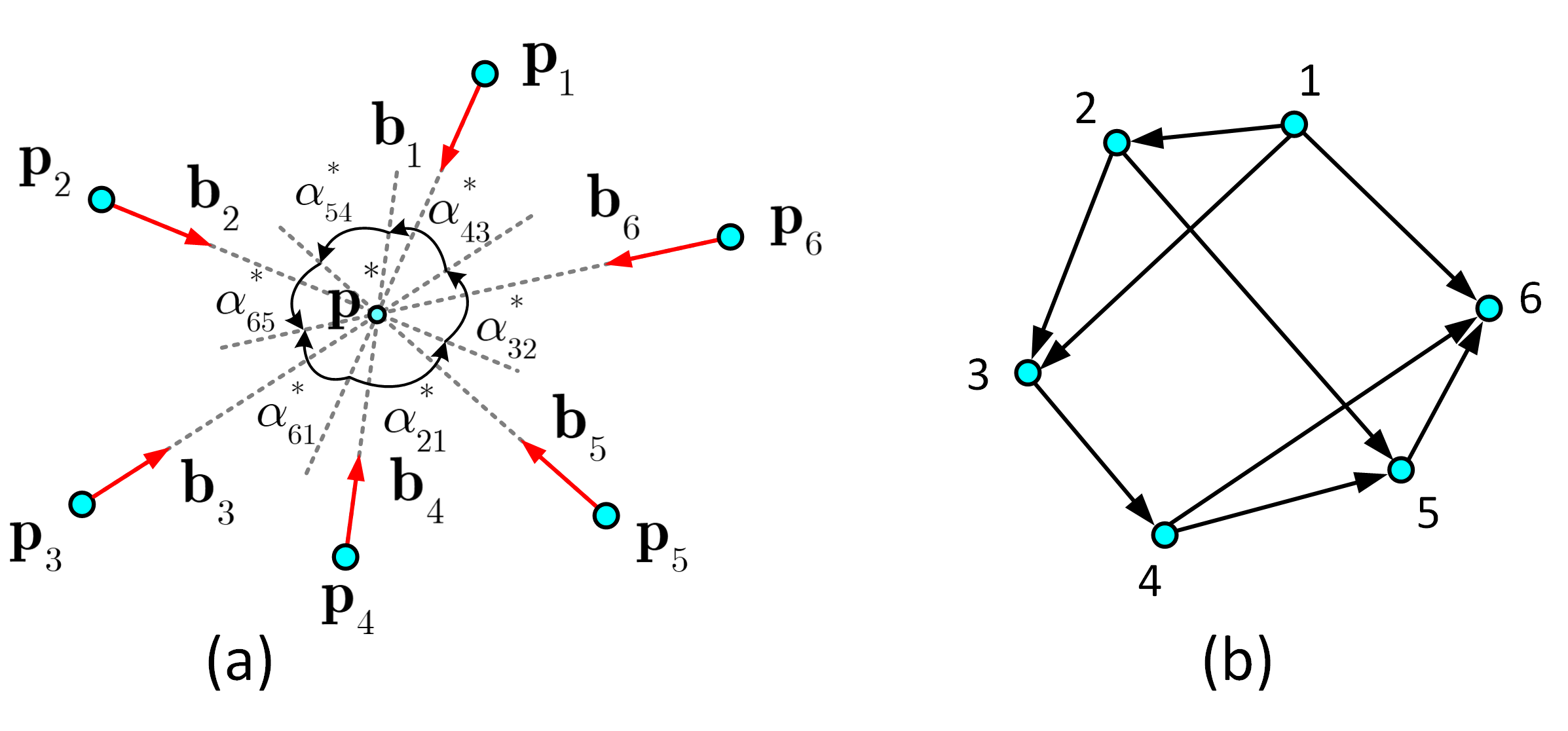}
\caption{A six-agent system. (a) The agents want to consent their heading direction into a common point $\m{p}^*$. (b) The information graph $\mc{G}$ is a connected directed acyclic graph (rooted at vertex 1).}
\label{fig:consensus}
\end{figure}
\subsection{The pointing consensus problem}
In order to focus the headings on a common point, each agent needs to exchange its heading information with its neighbors through a directed graph $\mc{G} = (\mc{V},\mc{E})$. 
In this paper, we assume that $\mc{G}$ is a rooted out-branching. Without loss of generality, we can label the vertices of $\mc{G}$ such that vertex 1 is the root of the graph. Suppose that we want all agents to point to a given target in the plane. To guide all headings, we assume that agent 1 knows the direction to the target. Each agent $i~(i \geq 2)$ is given a set of desired angles $\alpha_{ij}^*\in (-\pi,\pi]~(\forall j \in \mc{N}_i)$. Here, $\alpha_{ij}^*$ is the angle between $\m{b}_j$ and $\m{b}_i$ when $i$ and $j$ are pointing at the target. For example, a six-agent system is depicted in Fig.~\ref{fig:consensus}.

Obviously, if agent $i$ receives $\m{b}_{j}$ from agent $j$, it can calculate the difference between the two vectors $\m{b}_i$ and $\m{R}(\alpha_{ij}^*)\m{b}_{j}$, i.e., $i$ can calculate $\m{b}_i- \m{R}(\alpha_{ij}^*)\m{b}_{j}$. 
Then, agent $i$ can control its heading correspondingly to reduce the angle error $\|\m{b}_i- \m{R}(\alpha_{ij}^*)\m{b}_{j}\|$. We list all assumptions as follows:

\begin{Assumption} \label{assumption:1}
The information graph $\mc{G} = (\mc{V}, \mc{E})$ is rooted out-branching. Vertex 1 is the root of the graph. 
\end{Assumption}
\begin{Assumption} \label{assumption:2}
All agents' local reference frames are aligned. Agent 1 knows its desired heading vector $\m{b}_{1}^*$, the other agents know their desired angles $\alpha_{ij}^*$ and receive $\m{b}_j$ from all $j \in \mc{N}_i$. The set of desired subtended angles $\{\alpha_{ij}^*\}_{(i,j) \in \mc{E}}$ is feasible. That is, there exists $\m{p}^* \in \mb{R}^2$ s.t.
\begin{align}
 \m{b}_1^* &= ({\m{p}^* -\m{p}_{1}})/{\|\m{p}^* -\m{p}_{1}\|}, \label{cond:1}\\
\alpha_{ij}^* &= \angle(\m{p}^* - \m{p}_j,~\m{p}^* -\m{p}_{i}),~\forall i, j = 2, \ldots, n. \label{cond:2}
\end{align}
\end{Assumption}
\begin{figure}
\centering
\includegraphics[height=3.43cm]{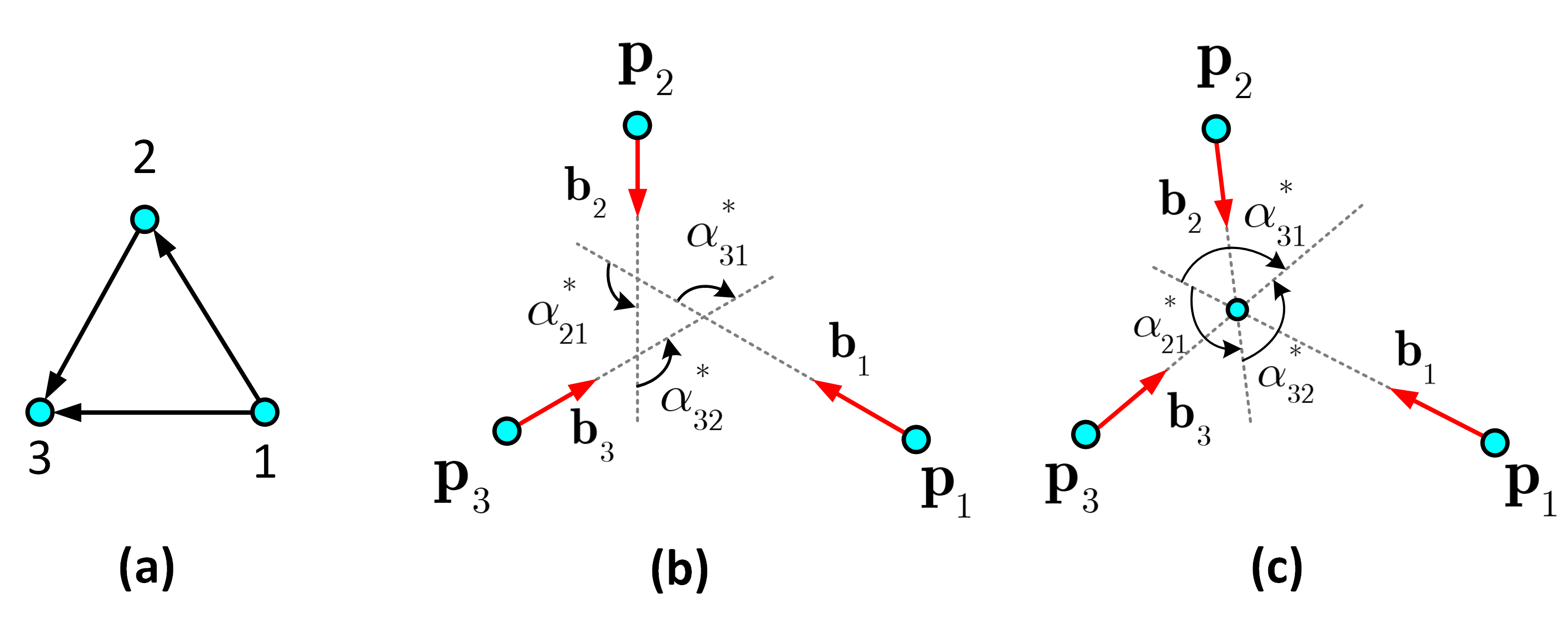}
\caption{A three-agent system with the communication graph (a) and the desired angles $\alpha_{21}^* = \alpha_{32}^* = -\alpha_{31}^* = \frac{2\pi}{3}$. In (b), although all desired angles are satisfied, the agents do not target a common point. In (c), by changing the direction of $\m{b}_1$, a pointing consensus is achieved.} 
\label{fig:uniqueness}
\end{figure}

In Assumption \ref{assumption:2}, it is remarked that the condition \eqref{cond:2} is equivalent to having $\m{p}^* \in \mb{R}^2$ such that:
\begin{equation} \label{cond:2b}
\frac{\m{p}^* - \m{p}_i} {\| \m{p}^* - \m{p}_i \| } = \m{R}(\alpha_{ij}^*) \frac{\m{p}^* - \m{p}_j}{\| \m{p}^* - \m{p}_j \|}, \forall (i, j) \in \mc{E}.
\end{equation}
The assumption that agent 1 knows $\m{b}_1^*$ is important to guide all agents' heading vectors to point to a common target. Miscontrolling the heading of the agent 1 will lead the other agents to not point to a common target even though all the subtended desired angles are satisfied. Consider Fig.~\ref{fig:uniqueness}, a three-agent system located at the vertices of an acute triangle $\m{p}_1, \m{p}_2, \m{p}_3$. The desired subtended angles are selected as $\alpha_{21}^* = \alpha_{32}^* = -\alpha_{31}^* = \frac{2\pi}{3}$.\footnote{From elementary geometry, we know that at the Torricelli point of the triangle, we have $\angle(\m{b}_1,\m{b}_2)= \angle(\m{b}_2,\m{b}_3) = \angle(\m{b}_3,\m{b}_1) = \frac{2 \pi}{3}~(*)$. The Torricelli point is also the only point in the plane that satisfies $(*)$.} When the heading $\m{b}_1$ does not point into the Torricelli point (Fig.~\ref{fig:uniqueness}(b)), there is a configuration where all desired angles are satisfied, but the heading vectors do not target a common point. This ambiguity does not happen in Fig.~\ref{fig:uniqueness}(c), when $\m{b}_1$ points toward the Torricelli point of the triangle.

Finally, we make an assumption for the later analysis.
\begin{Assumption} \label{assumption:3}
The initial heading vector of agent 1 satisfies
$\m{b}_1(0)  \neq - \m{b}_{1}^*.$ 
Also, the desired angles are such that $\alpha_{21}^* \notin \{0, -\pi~(\text{mod } 2\pi)\}$, 
\end{Assumption}

We can now state the main problem in this paper:
\begin{Problem} \label{Problem:1}
Given the $n$-agent system satisfying Assumption \ref{assumption:1}--\ref{assumption:3}, design a decentralized control law such that all agents' heading target a common point as $t \to \infty$.
\end{Problem}

\section{MAIN RESULTS}
\label{section:3}
\subsection{The proposed control law}
We propose the following control law for each agent to solve the pointing consensus problem with rooted out-branching graphs:
\begin{subequations}
\begin{align} 
\dot{\m{p}}_1' &= \m{P}_{\m{b}_i} \m{b}_1^*, \label{eq:control_law0}\\
\dot{\m{p}}_i' &= \m{P}_{\m{b}_i} \sum_{j \in \mc{N}_i} \m{R}(\alpha_{ij}^*)\m{b}_{j},~\forall i = 2, \ldots, n. \label{eq:control_law}
\end{align}
\end{subequations}
\begin{figure}[t!]
\centering
\subfloat[]{%
\includegraphics[height = 3cm]{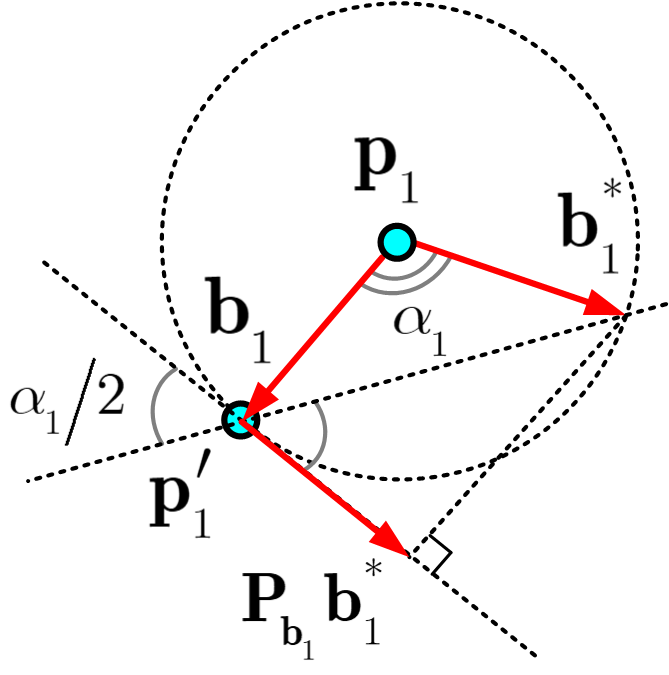}
\label{fig:agent1}}
\qquad
\subfloat[]{%
\includegraphics[height = 3cm]{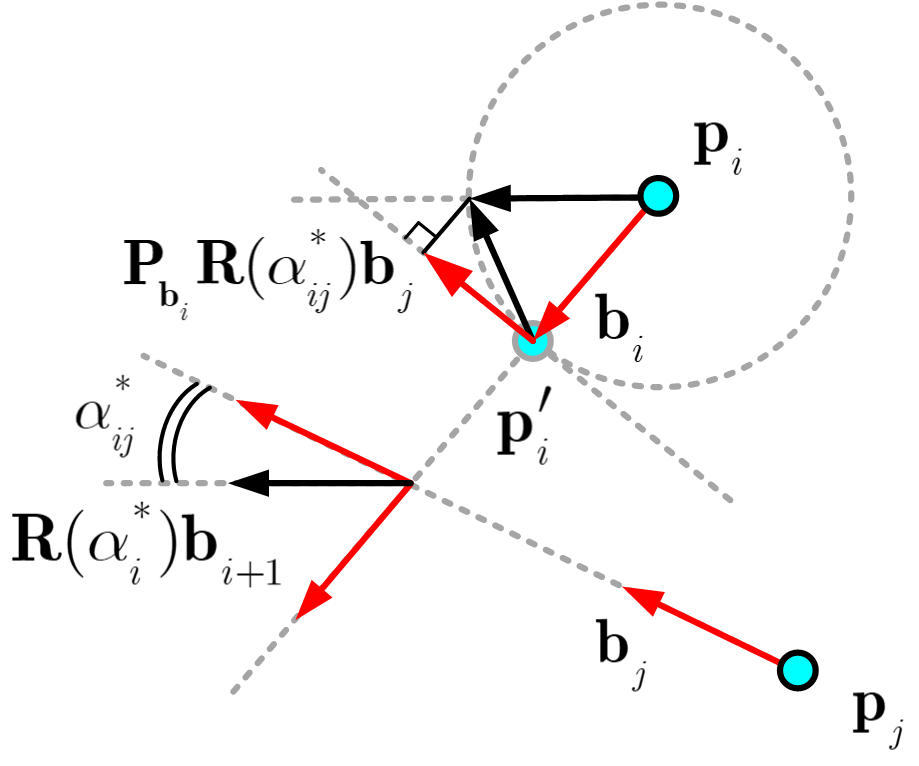}
\label{fig:principle}}
\caption{(a): The control law \eqref{eq:control_law0} steers $\m{b}_1$ toward $\m{b}_{1}^*$. (b):~Suppose that $\m{b}_{j}$ is invariant, the control law \eqref{eq:control_law} steers $\m{b}_i$ toward $\m{R}(\alpha_{ij}^*)\m{b}_{j}$.}
\end{figure}
It is easy to see that $\dot{\m{p}}_i'$ vanishes when $\m{b}_i$ aligns with $\m{R}(\alpha_{ij}^*)\m{b}_{j}$, or the desired subtended angle between two agents $i$ and $j$ is satisfied for all $i \in \{2, \ldots, n\}$. 
Intuitively, when the agent $j$'s heading $\m{b}_{i+1}$ is fixed, the control law \eqref{eq:control_law} steers $\m{b}_{i}$ toward $\m{R}(\alpha_{ij}^*)\m{b}_{j}$ (see Figs.~\ref{fig:agent1}--\ref{fig:principle}). We refer readers to \cite{zhao2015tac, Minh2016_ifaclss, Trinh2018} for some related explanations of the control law \eqref{eq:control_law}. 

From equation~\eqref{eq:bearing_dot}, we can also write the heading direction dynamics as follows:
\begin{subequations} \label{eq:system}
\begin{align} 
\dot{\m{b}}_1 & = \m{P}_{\m{b}_i} \m{b}_1^*, \label{eq:system1}\\
\dot{\m{b}}_i &= \m{P}_{\m{b}_i}\sum_{j \in \mc{N}_i}\m{R}(\alpha_{ij}^*) \m{b}_j,~i = 2, \ldots, n. \label{eq:system2}
\end{align}
\end{subequations}
We have the following remark on the control law \eqref{eq:control_law}: 
\begin{Remark}
Suppose that instead of receiving the heading direction from agent $j$, agent $i$ can measure the heading direction $\m{b}_j^i$ of agent $j$ in its local reference frame $^i\sum$, which is identified by the rotation matrix $\m{R}_i$ with regard to the global reference frame. The control law can be written in the local reference frame of agent $i$ as follows:
\begin{align}
\dot{\m{b}}_i^i = \m{P}_{\m{b}^i_i} \sum_{j \in \mc{N}_i} \m{R}(\alpha_{ij}^*) \m{b}_j^i, \label{eq:local_law}
\end{align}
where $\m{b}_{i}^i = \m{R}_{i}\tran \m{b}_i,~\m{b}_{j}^i = \m{R}_{i}\tran \m{b}_j$. Observe that $\m{P}_{\m{b}^i_i} = \m{I}_2 - \m{b}_{i}^i (\m{b}_{i}^{i})\tran = \m{I}_2 - \m{R}_{i}\tran \m{b}_i \m{b}_i\tran \m{R}_{i} = \m{R}_{i}\tran\m{P}_{\m{b}_i}\m{R}_{i}$, and $\m{R}(\alpha_{ij}^*)\m{R}_i\tran = \m{R}_i\tran \m{R}(\alpha_{ij}^*)$. Substituting into \eqref{eq:local_law}, we get
\begin{align}
\dot{\m{b}}_i^i = \m{R}_{i}\tran\m{P}_{\m{b}_i}\sum_{j \in \mc{N}_i} \m{R}(\alpha_{ij}^*) \m{b}_j, \label{eq:local_law1}
\end{align}
Thus, the control law \eqref{eq:local_law1} written in the global reference frame is 
$\dot{\m{b}}_i = \m{R}_{i} \dot{\m{b}}_i^i = \m{P}_{\m{b}_i}\sum_{j \in \mc{N}_i}\m{R}(\alpha_{ij}^*) \m{b}_j, $
which is exactly the same as \eqref{eq:system2}. Hence, if the agents can sense the relative heading of its neighbors, \eqref{eq:system2} does not require the agents' reference frames $^2\sum, \ldots, ^n\sum$ to be aligned.
\end{Remark}
\subsection{Analysis}
In this subsection, we study the system \eqref{eq:system}. Firstly, we examine the equilibrium set of \eqref{eq:system1}. We will prove that the proposed control law solves the pointing consensus problem for almost all initial conditions. It will be shown that the $n$-agent system has the form of a cascade system. Then, we adopt the notions of almost global  input-to-state stability to establish the convergence result. 

\begin{Lemma} \label{lem:agent1} The equilibrium set of \eqref{eq:system1} is: 
\begin{align}
\mc{E}_1 = \{\m{b}_1 \in \mb{R}^{2}|~ \m{b}_1 = \pm \m{b}_1^*\}.
\end{align}
The equilibrium $\m{b}_1 = \m{b}_1^*$ is almost globally exponentially stable while the equilibrium $\m{b}_1 = -\m{b}_1^*$ is unstable.
\end{Lemma}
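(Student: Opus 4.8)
The plan is to exploit the fact that the dynamics \eqref{eq:system1} evolves on the unit circle and there reduces to a one‑dimensional gradient flow. First I would verify invariance of $\{\m{b}_1 \in \mb{R}^2 : \|\m{b}_1\|=1\}$: since $\m{b}_1\tran\m{P}_{\m{b}_1} = \m{0}$, one has $\tfrac{d}{dt}\|\m{b}_1\|^2 = 2\m{b}_1\tran\m{P}_{\m{b}_1}\m{b}_1^* = 0$, so any trajectory with $\|\m{b}_1(0)\|=1$ stays on the unit circle. On this circle $\dot{\m{b}}_1 = \m{0}$ iff $\m{P}_{\m{b}_1}\m{b}_1^* = \m{0}$ iff $\m{b}_1^* \in \mc{N}(\m{P}_{\m{b}_1}) = \text{span}(\m{b}_1)$, and combined with $\|\m{b}_1\| = \|\m{b}_1^*\| = 1$ this forces $\m{b}_1 = \pm\m{b}_1^*$, which establishes the equilibrium set $\mc{E}_1$.

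Next I would take the Lyapunov candidate $V(\m{b}_1) = 1 - \m{b}_1\tran\m{b}_1^* = \tfrac12\|\m{b}_1 - \m{b}_1^*\|^2 \ge 0$, which vanishes only at $\m{b}_1 = \m{b}_1^*$. Differentiating along \eqref{eq:system1} and using symmetry and idempotency of $\m{P}_{\m{b}_1}$ gives $\dot V = -\m{b}_1^{*\tran}\m{P}_{\m{b}_1}\m{b}_1^* = -\|\m{P}_{\m{b}_1}\m{b}_1^*\|^2 \le 0$, so $\m{b}_1 = \m{b}_1^*$ is Lyapunov stable. To obtain the convergence rate and the almost‑global statement, I would parametrize the circle by the angle $\theta \in (-\pi,\pi]$ from $\m{b}_1^*$ to $\m{b}_1$, so that $\m{b}_1\tran\m{b}_1^* = \cos\theta$ and $\|\m{P}_{\m{b}_1}\m{b}_1^*\|^2 = 1 - \cos^2\theta = \sin^2\theta$. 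Projecting \eqref{eq:system1} onto the tangent direction $\m{b}_1^{\perp}$ yields the scalar dynamics $\dot\theta = -\sin\theta$, with $\theta = 0$ corresponding to $\m{b}_1 = \m{b}_1^*$ and $\theta = \pi$ to $\m{b}_1 = -\m{b}_1^*$.

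From $\dot\theta = -\sin\theta$, for every $\theta(0) \in (-\pi,\pi)$ the quantity $|\theta|$ is strictly decreasing, and using monotonicity of $\sin\theta/\theta$ on $(0,\pi)$ we get $\tfrac{d}{dt}|\theta| = -\sin|\theta| \le -\lambda|\theta|$ with $\lambda = \sin|\theta(0)|/|\theta(0)| > 0$, hence $|\theta(t)| \le |\theta(0)|e^{-\lambda t}$. This proves exponential convergence to $\m{b}_1^*$ from every initial condition except $\m{b}_1(0) = -\m{b}_1^*$ (a single point, hence measure zero), i.e.\ almost global exponential stability. Finally, linearizing $\dot\theta = -\sin\theta$ about $\theta = \pi$ gives $\dot{\tilde\theta} = \tilde\theta$ with eigenvalue $+1 > 0$, so $\m{b}_1 = -\m{b}_1^*$ is unstable; equivalently one can invoke Chetaev's theorem with $W = 1 + \m{b}_1\tran\m{b}_1^*$.

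The main obstacle I anticipate is the bookkeeping in passing rigorously from the vector field on $\mb{R}^2$ to the scalar angle dynamics — fixing the sign in $\dot\theta = -\sin\theta$, dealing with the wrap‑around at $\pm\pi$, and making precise that ``exponential'' here allows a rate depending on the initial condition, which is unavoidable on a compact manifold carrying an unstable equilibrium. Once the reduction $\dot\theta = -\sin\theta$ is established, all the remaining assertions of the lemma are immediate.
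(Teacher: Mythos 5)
Your proposal is correct, and it reaches the conclusion by a genuinely different route for the stability part. The equilibrium computation coincides with the paper's (both use $\mc{N}(\m{P}_{\m{b}_1}) = \text{span}(\m{b}_1)$ together with $\|\m{b}_1\|=\|\m{b}_1^*\|=1$), but from there the paper stays in the ambient coordinates: it linearizes the full planar vector field at $-\m{b}_1^*$, obtaining the positive-definite Jacobian $\m{I}_2 + \m{b}_1^*(\m{b}_1^*)\tran$ to conclude instability, and it establishes almost-global exponential convergence via the Lyapunov function $V=\tfrac12\|\m{b}_1-\m{b}_1^*\|^2$, LaSalle's invariance principle, and a differential inequality $\dot V \leq -\cos^2\!\left(\tfrac{\alpha_1(0)}{2}\right)V$. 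You instead reduce the flow to the scalar gradient system $\dot\theta=-\sin\theta$ on the invariant unit circle and read off everything from that one-dimensional ODE: the comparison $\sin|\theta|\geq \frac{\sin|\theta(0)|}{|\theta(0)|}|\theta|$ (valid because $|\theta|$ is monotone and $\sin x/x$ is decreasing on $(0,\pi)$) gives the exponential bound, and linearizing at $\theta=\pi$ gives instability. Your reduction is arguably cleaner and more self-contained — it avoids invoking LaSalle and makes transparent why the exponential rate must degrade as $\m{b}_1(0)$ approaches $-\m{b}_1^*$, a caveat the paper's bound shares (its rate $\cos^2(\alpha_1(0)/2)$ also vanishes as $\alpha_1(0)\to\pi$) but does not comment on. The paper's coordinate-free treatment, on the other hand, is the one that generalizes directly to the later lemmas, where the same projection-based Lyapunov argument is reused with perturbed inputs in the cascade; the angle reduction is specific to the planar, single-input case. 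One small point of care in your writeup: the identity $\m{b}_1\tran\m{P}_{\m{b}_1}=\m{0}$ holds only when $\|\m{b}_1\|=1$ (in general $\m{b}_1\tran\m{P}_{\m{b}_1}=(1-\|\m{b}_1\|^2)\m{b}_1\tran$), so the invariance computation should be phrased as showing $\tfrac{d}{dt}\|\m{b}_1\|^2 = 2(1-\|\m{b}_1\|^2)\,\m{b}_1\tran\m{b}_1^*$ vanishes on the unit circle; this does not affect your conclusion.
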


\begin{proof}
Any equilibrium point of \eqref{eq:system1} must satisfy 
$\m{P}_{\m{b}_1} \m{b}_{1}^* = \m{0}.$
Since $\mc{N}(\m{P}_{\m{b}_i}) = \text{span}(\m{b}_1)$ and $\|\m{b}_1^*\| = 1$, it follows that $\m{b}_{1} = \pm \m{b}_1^*$. We examine the stability of the equilibrium $\m{b}_1 = -\m{b}_1^*$ by linearization. Since
$\frac{\partial \dot{\m{b}}_1}{\partial \m{b}_1} = \frac{\partial }{\partial \m{b}_1} \left( \m{b}_1^* - \m{b}_1\m{b}_1\tran\m{b}_1^* \right)
= -\m{b}_1\tran\m{b}_1^* \m{I}_2 - \m{b}_1 (\m{b}_1^{*})\tran,$
it follows that $\m{A} = \frac{\partial \dot{\m{b}}_1}{\partial \m{b}_1}|_{\m{b}_1 = -\m{b}_1^*} = \m{I}_2 + \m{b}_1^* (\m{b}_1^{*})\tran$ is positive definite. Thus, the equilibrium $\m{b}_1 = -\m{b}_1^*$ is (exponentially) unstable. 

Next, consider the potential function $V = \frac{1}{2} \|\m{b}_{1} - \m{b}_{1}^*\|^2$, which is continuously differentiable. Moreover, $V \geq 0$ and $V = 0$ if and only if $\m{b}_{1} = \m{b}_{1}^*$. The derivative of $V$ along a trajectory of \eqref{eq:system1} is
\begin{align}
\dot{V} &= (\m{b}_{1} - \m{b}_{1}^*)\tran \dot{\m{b}}_{1} = (\m{b}_{1} - \m{b}_{1}^*)\tran \m{P}_{\m{b}_{1}} \m{b}_{1}^* \nonumber\\
& = -(\m{b}_{1}^*)\tran \m{P}_{\m{b}_{1}} \m{b}_{1}^* = -  \|\m{P}_{\m{b}_{1}} \m{b}_{1}^*\|^2 \leq 0. \label{eq:lem2-1}
\end{align}
Obviously, $\dot{V} = 0$ if and only if $\mathbf{b}_{1} = \pm \mathbf{b}_{1}^*$. Since the equilibrium $-\mathbf{b}_{1}^*$ is unstable, for all $\mathbf{b}_1(0) \neq \mathbf{b}_{1}^*$,  ${\m{b}_{1}} \to \m{b}_{1}^*$ due to LaSalle's invariance principle. Further, let $\alpha_{1}$ be the angle between $\m{b}_1$ and $\m{b}_1^*$. For $\m{b}_1(0) \neq - \m{b}_1^*$, we have $\alpha_{1} \in [0, \pi)$, and $ \|\m{P}_{\m{b}_{1}}(\m{b}_{1} - \m{b}_{1}^*)\| =  \|\m{b}_{1} - \m{b}_{1}^*\| |\cos \left(\frac{\alpha_1}{2}\right)|$. 
As a result, since $\alpha_1(t) \to 0$ as $\m{b}_1 \to \m{b}_1^*$ and the $\cos(\cdot)$ function is decreasing in $[0, \frac{\pi}{2})$, we have
\begin{equation}\label{eq.expstab1}
\dot{V} \leq -\cos^2 \frac{\alpha_1(t)}{2} \|\m{b}_{1} - \m{b}_{1}^*\| \leq -\cos^2 \frac{\alpha_1(0)}{2} V.
\end{equation}
Thus, $\m{b}_{1} = \m{b}_{1}^*$ is almost globally exponentially stable.
\end{proof}
\begin{Lemma} \label{lem:agent2} Under Assumptions \ref{assumption:1}--\ref{assumption:3}, the equilibrium set of agent 2 is: 
\begin{align}
\mc{E}_2 = \{\m{b}_2 \in \mb{R}^{2} |~ \m{b}_2 = \pm \m{R}(\alpha_{21}^*) \m{b}_1^*\}.
\end{align}
The equilibrium $\m{b}_2 = \m{R}(\alpha_{21}^*) \m{b}_1^*$ is almost globally asymptotically stable while the equilibrium $\m{b}_2 = -\m{R}(\alpha_{21}^*) \m{b}_1^*$ is unstable.
\end{Lemma}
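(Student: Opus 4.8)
The plan is to exploit the cascade structure of \eqref{eq:system}. Since $\mc{G}$ is rooted out-branching, every non-root vertex has in-degree one, so (relabelling the children of the root if necessary) we may take $\mc{N}_2 = \{1\}$; then agent~$2$ obeys $\dot{\m{b}}_2 = \m{P}_{\m{b}_2}\m{R}(\alpha_{21}^*)\m{b}_1$, which is driven only by the autonomous state $\m{b}_1$ of agent~$1$ analysed in Lemma~\ref{lem:agent1}. Put $\m{b}_2^* := \m{R}(\alpha_{21}^*)\m{b}_1^*$, a unit vector since $\|\m{R}(\alpha_{21}^*)\| = 1$. One checks directly that $\tfrac{d}{dt}\|\m{b}_2\|^2 = 2\m{b}_2\tran\m{P}_{\m{b}_2}\m{R}(\alpha_{21}^*)\m{b}_1 = 0$, so $\|\m{b}_2(t)\|$ is invariant. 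The equilibrium set then follows at once: at any equilibrium of the cascade $\m{b}_1 \in \{\pm\m{b}_1^*\}$, and $\m{P}_{\m{b}_2}\m{R}(\alpha_{21}^*)\m{b}_1 = \m{0}$ together with $\mc{N}(\m{P}_{\m{b}_2}) = \text{span}(\m{b}_2)$ forces $\m{b}_2 = \pm\m{R}(\alpha_{21}^*)\m{b}_1 = \pm\m{b}_2^*$ in every case, which gives $\mc{E}_2$.

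First I would treat the nominal subsystem obtained by freezing $\m{b}_1 = \m{b}_1^*$, namely $\dot{\m{b}}_2 = \m{P}_{\m{b}_2}\m{b}_2^*$: this is exactly \eqref{eq:system1} with the unit vector $\m{b}_2^*$ in place of $\m{b}_1^*$, so Lemma~\ref{lem:agent1} applies verbatim --- $\m{b}_2 = \m{b}_2^*$ is almost globally exponentially stable and $\m{b}_2 = -\m{b}_2^*$ is unstable (linearization $\m{I}_2 + \m{b}_2^*(\m{b}_2^*)\tran \succ 0$). Because $\{\m{b}_1 = \m{b}_1^*\}$ is invariant for the cascade, instability of $-\m{b}_2^*$ there already gives instability of $-\m{b}_2^*$ in the full system. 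To handle the genuine (time-varying) drive, set $\tilde{\m{b}}_1 := \m{b}_1 - \m{b}_1^*$, which by Lemma~\ref{lem:agent1} and Assumption~\ref{assumption:3} decays exponentially whenever $\m{b}_1(0) \neq -\m{b}_1^*$. With $V_2 = \tfrac12\|\m{b}_2 - \m{b}_2^*\|^2$ and $\m{R}(\alpha_{21}^*)\m{b}_1 = \m{b}_2^* + \m{R}(\alpha_{21}^*)\tilde{\m{b}}_1$, a computation mirroring \eqref{eq:lem2-1} gives
\[
\dot{V}_2 = -\|\m{P}_{\m{b}_2}\m{b}_2^*\|^2 + (\m{b}_2 - \m{b}_2^*)\tran\m{P}_{\m{b}_2}\m{R}(\alpha_{21}^*)\tilde{\m{b}}_1 \;\le\; -\|\m{P}_{\m{b}_2}\m{b}_2^*\|^2 + \|\m{P}_{\m{b}_2}\m{b}_2^*\|\,\|\tilde{\m{b}}_1\|,
\]
where I used $\|\m{P}_{\m{b}_2}(\m{b}_2 - \m{b}_2^*)\| = \|\m{P}_{\m{b}_2}\m{b}_2^*\|$ (since $\m{P}_{\m{b}_2}\m{b}_2 = \m{0}$) and $\|\m{R}(\alpha_{21}^*)\| = 1$. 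Parametrising by the angle $\alpha_2$ between $\m{b}_2$ and $\m{b}_2^*$ yields $\|\m{P}_{\m{b}_2}\m{b}_2^*\| = |\sin\alpha_2|$ and $V_2 = 2\sin^2(\alpha_2/2)$, hence $\dot{V}_2 < 0$ whenever $|\sin\alpha_2| > \|\tilde{\m{b}}_1\|$ --- an almost-ISS Lyapunov estimate in the input $\tilde{\m{b}}_1$, valid away from the antipodal point $-\m{b}_2^*$.

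Finally, since $\tilde{\m{b}}_1(t) \to \m{0}$, a converging-input argument gives $\m{b}_2(t) \to \{\m{b}_2^*, -\m{b}_2^*\}$ for all $\m{b}_1(0) \neq -\m{b}_1^*$, and the almost-global input-to-state stability cascade argument --- combining the a.g.a.s.\ of the $\m{b}_2$-subsystem under $\tilde{\m{b}}_1 \equiv \m{0}$ with the exponential convergence of agent~$1$ in Lemma~\ref{lem:agent1} --- shows that $(\m{b}_1, \m{b}_2) \to (\m{b}_1^*, \m{b}_2^*)$, and hence $\m{b}_2 \to \m{R}(\alpha_{21}^*)\m{b}_1^*$, for all initial conditions outside a Lebesgue-null set, namely the union of the stable sets of the three unstable equilibria $(\m{b}_1^*, -\m{b}_2^*)$, $(-\m{b}_1^*, \m{b}_2^*)$ and $(-\m{b}_1^*, -\m{b}_2^*)$. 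The main obstacle is exactly this last step: the bound on $\dot{V}_2$ degenerates near $\alpha_2 = \pi$, where $|\sin\alpha_2|$ is small and the perturbation term can dominate, so no single monotone Lyapunov function gives a global conclusion; one must appeal to the almost-global ISS / reduction machinery to certify that the exceptional set has measure zero rather than merely that it contains $-\m{b}_2^*$.
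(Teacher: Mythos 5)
Your proposal is correct and follows essentially the same route as the paper: the same cascade decomposition with $\m{b}_1$ as an exponentially converging input, the same frozen-input reduction of $\dot{\m{b}}_2=\m{P}_{\m{b}_2}\m{b}_2^*$ to Lemma~\ref{lem:agent1}, the same Lyapunov function $V=\tfrac12\|\m{b}_2-\m{b}_2^*\|^2$ with the identical differential inequality \eqref{eq:agent2-ultimate}, and the same final appeal to almost-global ISS for cascades to handle the degeneracy near $-\m{b}_2^*$. The only quibble is your justification that \emph{every} non-root vertex of a rooted out-branching has in-degree one --- false under the paper's definition (cf.\ vertex 3 in Fig.~\ref{fig:consensus}) --- but the conclusion you actually need, $\mc{N}_2=\{1\}$, holds by the topological labelling and is exactly what the paper assumes in \eqref{eq:agent2_dynam}.
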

\begin{proof}
Consider the agent 2's heading dynamics:
\begin{equation} \label{eq:agent2_dynam}
\dot{\m{b}}_2 = \m{P}_{\m{b}_2} \m{R}(\alpha_{21}^*) \m{b}_1,
\end{equation}
Observe that the equilibria of \eqref{eq:agent2_dynam} depends on the agent 1's equilibria. We therefore rewrite the dynamics of agents 1 and 2 in form of a cascade system as follows: 
\begin{subequations} \label{eq:cascade}
\begin{align}
\dot{\m{b}}_1 &= \m{P}_{\m{b}_1} \m{b}_1^* = \m{f}_1(\m{b}_1), \label{eq:cascade-1}\\
\dot{\m{b}}_2 &= \m{P}_{\m{b}_2} \m{R}(\alpha_{21}^*)\m{b}_1 = \m{f}_2(\m{b}_1,\m{b}_2).\label{eq:cascade-2}
\end{align}
\end{subequations}
In \eqref{eq:cascade}, $\m{b}_1$ is an input to the system \eqref{eq:cascade-2}. Further, when $\m{b}_1 = \m{b}_1^*$, we may express \eqref{eq:cascade-2} as follows:
\begin{align} \label{eq:unforce-2}
\dot{\m{b}}_2 = \m{f}_2(\m{b}_1^*,\m{b}_2) = \m{P}_{\m{b}_2} \m{R}(\alpha_{21}^*)\m{b}_1^* = \m{P}_{\m{b}_2} \m{b}_2^*,
\end{align}
which has the same form as \eqref{eq:cascade-1}. From Lemma \ref{lem:agent1}, the equilibria of \eqref{eq:agent2_dynam} satisfies $\m{b}_2 = \pm \m{R}(\alpha_{21}^*) \m{b}_1^*$. Moreover, the equilibrium $\m{b}_2 = \m{b}_2^*$ of \eqref{eq:unforce-2} is almost globally exponentially stable while $\m{b}_2 = -\m{b}_2^*$ is isolated and unstable. Letting $\m{b}_2^* = \m{R}(\alpha_{21}^*) \m{b}_1^*$, we consider the function $V = \frac{1}{2}\|\m{b}_2 - \m{b}_2^*\|^2$ which is continuously differentiable. Moreover, $V \geq 0$ and $V = 0$ if and only if $\m{b}_2 = \m{b}_2^*$. The derivative of $V$ along a trajectory of \eqref{eq:agent2_dynam} is given as follows:
\begin{align}
\dot{V} & = (\m{b}_2 - \m{b}_2^*)\tran \m{P}_{\m{b}_2} \m{R}(\alpha_{21}^*) \m{b}_1 \nonumber\\
& = -\m{b}_2^{*T} \m{P}_{\m{b}_2} (\m{b}_2^* + \m{R}(\alpha_{21}^*)(\m{b}_1 - \m{b}_1^*)) \nonumber\\
& \leq - \|\m{P}_{\m{b}_2} \m{b}_2^{*}\|^2 + \|\m{P}_{\m{b}_2} \m{b}_2^{*}\| \|\m{b}_1 - \m{b}_1^*\|. \label{eq:agent2-ultimate}
\end{align}
It follows from \eqref{eq.expstab1} that there exists $\delta_1,\eta_1 > 0$ such that $\|\m{b}_1 - \m{b}_1^*\| \leq \delta_1 e^{-\eta_1 t}$. Thus, 
\begin{align}
\dot{V} &\leq - \|\m{P}_{\m{b}_2} \m{b}_2^{*}\|^2 + \delta_1 e^{-\eta_1 t} \|\m{P}_{\m{b}_2} \m{b}_2^{*}\| \leq \frac{\delta_1^2}{4} e^{-2 \eta_1 t}, 
\end{align}
where the inequality holds if and only if $\|\m{P}_{\m{b}_2} \m{b}_2^{*}\| = \frac{\delta_1}{2} e^{-\eta_1 t}$. Thus, there holds
\begin{equation}
V(\infty) - V(0) \leq \int_{0}^\infty \frac{\delta_1^2}{4} e^{-2 \eta t} dt = \frac{\delta_1^2}{2 \eta_1},
\end{equation}
which shows that $V$ is bounded. Therefore, the system \eqref{eq:cascade-2} satisfies the ultimate boundedness property \cite{angeli2011}[Proposition 3], and thus \eqref{eq:cascade-2} is also almost globally Input-to-State-Stable (ISS) with regard to the input $\m{b}_1$. 

It follows from $\|\m{b}_1 - \m{b}_1^*\| \to 0$ (Lemma \ref{lem:agent1}) and \cite{angel2004}[Theorem 2] that the equilibrium $\m{b}_2 = \m{b}_2^*$ of \eqref{eq:cascade-2} is almost globally asymptotically stable.
\end{proof}

We have the following remark on the uniqueness of the common point from Lemma \ref{lem:agent2}.
\begin{Remark} \label{remark:p_star}
The headings $\m{b}_1^*$ and $\m{b}_2^*$ of agents 1 and 2 at their equilibrium states uniquely determine a point $\m{p}^* \in \mb{R}^2$. Indeed, the equations
\begin{align*}
\frac{\m{p}^* - \m{p}_1}{\|\m{p}^* - \m{p}_1\|} =  \m{b}_{1}^*,~\text{and }
\frac{\m{p}^* - \m{p}_2}{\|\m{p}^* - \m{p}_2\|} = \m{R}(\alpha_{21}^*)\m{b}_{1}^* := \m{b}_{2}^*,
\end{align*}
imply that
$\m{P}_{\m{b}_1^*}\frac{\m{p}^* - \m{p}_1}{\|\m{p}^* - \m{p}_1\|} = \m{P}_{\m{b}_1^*} \m{b}_{1}^* = \m{0}, $ and 
$\m{P}_{\m{b}_2^*}\frac{\m{p}^* - \m{p}_2}{\|\m{p}^* - \m{p}_2\|} = \m{P}_{\m{b}_2^*}\m{b}_{2}^* = \m{0}.$
Thus, $(\m{P}_{\m{b}_1^*} + \m{P}_{\m{b}_2^*}) \m{p}^* = \m{P}_{\m{b}_1^*} \m{p}_1 + \m{P}_{\m{b}_2^*} \m{p}_2$ and for $\alpha_{21}^* \notin \{0, \pi\}$, (or i.e., $\m{b}_i^* \neq \m{b}_j^*$) we can uniquely determine
\begin{equation} \label{eq:p_star}
\m{p}^* = (\m{P}_{\m{b}_1^*} + \m{P}_{\m{b}_2^*})^{-1} (\m{P}_{\m{b}_1^*} \m{p}_1 + \m{P}_{\m{b}_2^*} \m{p}_2),
\end{equation}
which is the common point for all agents' heading direction.
\end{Remark}


Now, consider an arbitrary agent $i~(i \geq 3)$ with the heading dynamics as given in \eqref{eq:system2}. The system \eqref{eq:system} can be written in the following form:
\begin{subequations} \label{eq:agent-1-i}
\begin{align}  
\dot{\m{b}}_{[1:i-1]} &= \m{f}_{[1:i-1]}(\m{b}_{[1:i-1]}) \\
\dot{\m{b}}_i &= \m{f}_i(\m{b}_{[1:i-1]}, \m{b}_i), \label{eq:agent-i-f}
\end{align}
\end{subequations}
where $\m{b}_{[1:i-1]} = [\m{b}_1\tran, \ldots, \m{b}_{i-1}\tran]\tran$ can be considered as an input to the system \eqref{eq:agent-i-f}.

\begin{Lemma} \label{lem:agent_i} Under Assumptions \ref{assumption:1}--\ref{assumption:3}, given that all agents $1, \ldots, i-1$'s heading vectors are pointing toward $\m{p}^*$ determined from \eqref{eq:p_star} , i.e., $\m{b}_j = \m{b}_j^*$, $\forall j = 1, \ldots, i-1$, the heading $\m{b}_i$ of agent $i$~($i \geq 2$) converges asymptotically to $\m{b}_i = \m{b}_i^* := \frac{\m{p}^* - \m{p}_i}{\|\m{p}^* - \m{p}_i\|}$ satisfying $\m{b}_i^* = \m{R}(\alpha_{ij}^*) \m{b}_j^*, \forall j \in \mc{N}_i$ if initially $\m{b}_i(0) \neq -\m{b}^*_i$.
\end{Lemma}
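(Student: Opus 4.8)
The plan is to reduce the claim to Lemma~\ref{lem:agent1}, by showing that once the in-neighbours of $i$ sit at their targets, agent~$i$'s dynamics are a rescaled copy of \eqref{eq:system1}. Let $\m{p}^*\in\mb{R}^2$ be the point uniquely fixed by agents~$1$ and~$2$ through \eqref{eq:p_star} (uniqueness is Remark~\ref{remark:p_star}), and set $\m{b}_k^*:=({\m{p}^*-\m{p}_k})/{\|\m{p}^*-\m{p}_k\|}$ for each $k$. Since $\mc{G}$ is acyclic we may take the labelling to be a topological order with root~$1$, so $\mc{N}_i\subseteq\{1,\dots,i-1\}$ and the hypothesis $\m{b}_j=\m{b}_j^*$, $j=1,\dots,i-1$, covers every $j\in\mc{N}_i$. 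The first step is to invoke feasibility (Assumption~\ref{assumption:2}, equations \eqref{cond:2}--\eqref{cond:2b}), which yields the auxiliary identity in the statement: $\m{b}_i^*=\m{R}(\alpha_{ij}^*)\m{b}_j^*$ for every $j\in\mc{N}_i$. Substituting $\m{b}_j=\m{b}_j^*$ into \eqref{eq:system2} therefore makes every term of the neighbour sum equal to $\m{b}_i^*$, and the sum collapses:
\begin{equation*}
\dot{\m{b}}_i=\m{P}_{\m{b}_i}\sum_{j\in\mc{N}_i}\m{R}(\alpha_{ij}^*)\m{b}_j^*=d_i\,\m{P}_{\m{b}_i}\m{b}_i^*,
\end{equation*}
where $d_i:=|\mc{N}_i|\geq 1$ is the in-degree of vertex~$i$.

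This is exactly the dynamics \eqref{eq:system1} of agent~$1$ with $\m{b}_1^*$ replaced by $\m{b}_i^*$ and an inoffensive positive scalar $d_i$ in front. Next I would repeat the argument of Lemma~\ref{lem:agent1} verbatim: since $\m{P}_{\m{b}_i}\m{b}_i=\m{0}$ the flow preserves $\|\m{b}_i\|$, so the state stays on the unit circle; the equilibrium set is $\{\m{b}_i=\pm\m{b}_i^*\}$; the Jacobian at $\m{b}_i=-\m{b}_i^*$ equals $d_i(\m{I}_2+\m{b}_i^*(\m{b}_i^*)\tran)$, which is positive definite, so $-\m{b}_i^*$ is exponentially unstable; and with $V=\tfrac12\|\m{b}_i-\m{b}_i^*\|^2$ one has $\dot V=-d_i\|\m{P}_{\m{b}_i}\m{b}_i^*\|^2\leq 0$. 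Combining LaSalle's invariance principle with the instability of $-\m{b}_i^*$ gives $\m{b}_i\to\m{b}_i^*$ whenever $\m{b}_i(0)\neq-\m{b}_i^*$, and the angle estimate behind \eqref{eq.expstab1} even upgrades this to exponential convergence. Since $\m{b}_i^*=({\m{p}^*-\m{p}_i})/{\|\m{p}^*-\m{p}_i\|}$, agent~$i$'s heading then points at the same $\m{p}^*$ as agents $1,\dots,i-1$, which is the assertion.

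For use in the main theorem, where $\m{b}_{[1:i-1]}$ is not frozen at $\m{b}_{[1:i-1]}^*$ but converges to it exponentially (by the inductive application of Lemmas~\ref{lem:agent1}--\ref{lem:agent2}), the argument of Lemma~\ref{lem:agent2} carries over to the cascade \eqref{eq:agent-1-i}: writing $\sum_{j\in\mc{N}_i}\m{R}(\alpha_{ij}^*)\m{b}_j=d_i\m{b}_i^*+\sum_{j\in\mc{N}_i}\m{R}(\alpha_{ij}^*)(\m{b}_j-\m{b}_j^*)$ and using $\|\m{R}(\cdot)\|=1$, one obtains
\begin{equation*}
\dot V\leq-\|\m{P}_{\m{b}_i}(d_i\m{b}_i^*)\|^2+\|\m{P}_{\m{b}_i}(d_i\m{b}_i^*)\|\sum_{j\in\mc{N}_i}\|\m{b}_j-\m{b}_j^*\|,
\end{equation*}
which, after bounding each $\|\m{b}_j-\m{b}_j^*\|$ by a decaying exponential, gives the ultimate boundedness of \cite{angeli2011}[Proposition~3] and hence almost global ISS of \eqref{eq:agent-i-f} with respect to the input $\m{b}_{[1:i-1]}$; since that input tends to $\m{b}_{[1:i-1]}^*$, \cite{angel2004}[Theorem~2] then yields that $\m{b}_i=\m{b}_i^*$ is almost globally asymptotically stable. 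The only genuinely non-mechanical step is the very first one: that feasibility together with the uniqueness of $\m{p}^*$ forces $\m{R}(\alpha_{ij}^*)\m{b}_j^*$ to be one and the same direction $\m{b}_i^*$ for \emph{every} in-neighbour $j$, so that the neighbour sum in \eqref{eq:system2} degenerates to a scalar multiple of $\m{b}_i^*$; once that collapse is secured, the rest is a rescaled copy of Lemma~\ref{lem:agent1} followed by the cascade/ISS reasoning of Lemma~\ref{lem:agent2}.
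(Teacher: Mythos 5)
Your proposal is correct and follows essentially the same route as the paper: use feasibility (Assumption~\ref{assumption:2} via \eqref{cond:2b}) to get $\m{b}_i^* = \m{R}(\alpha_{ij}^*)\m{b}_j^*$ for all $j\in\mc{N}_i$, collapse the neighbour sum to $|\mc{N}_i|\,\m{P}_{\m{b}_i}\m{b}_i^*$, and then observe that this is a positively rescaled copy of \eqref{eq:system1} so that Lemma~\ref{lem:agent1}'s argument applies verbatim. The closing cascade/ISS paragraph is extra material that the paper defers to the proof of Theorem~\ref{thm:1} rather than part of this lemma, but it matches that proof as well.
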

\begin{proof}
 Assumption \ref{assumption:2} guarantees the existence of the point $\m{p}^*$ which is uniquely determined from \eqref{eq:p_star}. From condition \eqref{cond:2b}, for
$\m{b}_i^* = \frac{\m{p}^* - \m{p}_i}{\|\m{p}^* - \m{p}_i\|},$
we conclude that $\m{b}^*_i$ satisfies $\m{b}_i^* = \m{R}(\alpha_{ij}^*) \m{b}_j^*,~\forall j \in \mc{N}_i.$
Further, the dynamics of $\m{b}_i$ when $\m{b}_j = \m{b}_j^*, \forall j = 1, \ldots, i-1$ can be  rewritten as follows:
\begin{align} \label{eq:agent-i}
\dot{\m{b}}_i &= \m{P}_{\m{b}_i} \sum_{j \in \mc{N}_i} \m{R}(\alpha_{ij}^*) \m{b}_j^*=|\mc{N}_i| \m{P}_{\m{b}_i} \m{b}_i^*.
\end{align}
The equilibria of \eqref{eq:agent-i-f} is the solution of $\dot{\m{b}}_i = \m{f}_i(\m{b}_1^*, \ldots, \m{b}_{i-1}^*, \m{b}_i) = \m{0}.$
This implies $\m{b}_i = \pm \m{b}_i^*$ are two equilibria of \eqref{eq:agent-i}. The system  \eqref{eq:agent-i} has the same form as \eqref{eq:system1}. Thus, we conclude that $\m{b}_i = \m{b}_i^*$ is an almost globally exponential stable equilibrium of \eqref{eq:agent-i}, while the equilibrium $\m{b}_i = -\m{b}_i^*$ is unstable.
\end{proof}
We can now state the main result of this paper.

\begin{theorem} \label{thm:1}
Under the Assumptions \ref{assumption:1}-\ref{assumption:3}, the headings asymptotically target a common point, i.e., $\m{b}_1 \to \m{b}_1^*$ and $\m{b}_i \to \m{R}(\alpha_{ij}^*) \m{b}_{j}$, $\forall (i,j) \in \mc{E}$ as $t \to \infty$.
\end{theorem}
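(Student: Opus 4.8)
The plan is to exploit the cascade structure of the $n$-agent system together with the rooted out-branching property of $\mc{G}$, proceeding by induction along a topological ordering of the vertices. Since $\mc{G}$ is rooted out-branching (Assumption~\ref{assumption:1}), we may relabel the vertices so that every directed edge $(j,i)\in\mc{E}$ has $j<i$; thus the dynamics of $\m{b}_i$ in \eqref{eq:system2} depends only on $\m{b}_j$ with $j\in\mc{N}_i\subseteq\{1,\ldots,i-1\}$, which is precisely the cascade form \eqref{eq:agent-1-i}. The induction hypothesis at stage $i$ is that $\m{b}_j\to\m{b}_j^*$ for all $j=1,\ldots,i-1$, where $\m{b}_j^*=(\m{p}^*-\m{p}_j)/\|\m{p}^*-\m{p}_j\|$ and $\m{p}^*$ is the feasible common point from Assumption~\ref{assumption:2} (uniquely pinned down via \eqref{eq:p_star} in Remark~\ref{remark:p_star}).

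First I would establish the base cases. For $i=1$, Lemma~\ref{lem:agent1} gives $\m{b}_1\to\m{b}_1^*$ almost globally and exponentially, using Assumption~\ref{assumption:3} that $\m{b}_1(0)\neq-\m{b}_1^*$. For $i=2$, Lemma~\ref{lem:agent2} gives $\m{b}_2\to\m{b}_2^*=\m{R}(\alpha_{21}^*)\m{b}_1^*$ almost globally asymptotically, again invoking Assumption~\ref{assumption:3} (here the condition $\alpha_{21}^*\notin\{0,-\pi\}$ guarantees $\m{b}_2^*\neq\pm\m{b}_1^*$, so the equilibria are isolated and $\m{p}^*$ is well defined). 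For the inductive step with $i\geq3$: assuming $\m{b}_{[1:i-1]}\to\m{b}_{[1:i-1]}^*$, Lemma~\ref{lem:agent_i} shows that the ``frozen-input'' system \eqref{eq:agent-i}, namely $\dot{\m{b}}_i=|\mc{N}_i|\,\m{P}_{\m{b}_i}\m{b}_i^*$, has the same structure as \eqref{eq:system1} and hence $\m{b}_i=\m{b}_i^*$ is almost globally exponentially stable while $\m{b}_i=-\m{b}_i^*$ is unstable, under the analogue of Assumption~\ref{assumption:3}, $\m{b}_i(0)\neq-\m{b}_i^*$.

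The remaining work is to promote ``the frozen-input system is almost globally exponentially stable'' to ``the cascade converges,'' exactly as done in the proof of Lemma~\ref{lem:agent2}. Concretely, with $V=\tfrac12\|\m{b}_i-\m{b}_i^*\|^2$ one bounds $\dot V\leq-c_1\|\m{P}_{\m{b}_i}\m{b}_i^*\|^2+c_2\|\m{P}_{\m{b}_i}\m{b}_i^*\|\,\sum_{j\in\mc{N}_i}\|\m{b}_j-\m{b}_j^*\|$; by the induction hypothesis the perturbation term decays exponentially, which yields ultimate boundedness of $V$ and hence almost-global ISS of \eqref{eq:agent-i-f} with respect to the input $\m{b}_{[1:i-1]}$ via \cite{angeli2011}[Proposition~3], and then almost-global asymptotic stability of $\m{b}_i=\m{b}_i^*$ via \cite{angel2004}[Theorem~2]. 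Running the induction to $i=n$ gives $\m{b}_i\to\m{b}_i^*$ for all $i$, and since by construction $\m{b}_i^*=\m{R}(\alpha_{ij}^*)\m{b}_j^*$ for every $(i,j)\in\mc{E}$ with all $\m{b}_j^*$ pointing at $\m{p}^*$, the claimed pointing consensus $\m{b}_1\to\m{b}_1^*$ and $\m{b}_i\to\m{R}(\alpha_{ij}^*)\m{b}_j$ follows.

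The main obstacle is the bookkeeping in the inductive ISS step when $|\mc{N}_i|>1$: one must handle a sum of exponentially decaying perturbations $\m{R}(\alpha_{ij}^*)(\m{b}_j-\m{b}_j^*)$ with possibly different decay rates coming from different branches, and argue that the ``almost-all-initial-conditions'' exceptional sets (one measure-zero set per agent, where $\m{b}_i(0)=-\m{b}_i^*$) accumulate over $i=1,\ldots,n$ into still a measure-zero set in the product state space — this is where the cascade ISS machinery of \cite{angel2004,angeli2011} must be applied carefully rather than just the base-case Lyapunov argument. A secondary subtlety is confirming that the limit $\m{b}_i^*$ selected by Lemma~\ref{lem:agent_i} is consistent across all in-neighbors, which is guaranteed by feasibility \eqref{cond:2b}; no contradiction can arise precisely because $\m{p}^*$ is common to all agents.
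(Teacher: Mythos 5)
Your proposal is correct and follows essentially the same route as the paper: induction along the cascade ordering, base cases from Lemmas \ref{lem:agent1}--\ref{lem:agent2}, frozen-input stability from Lemma \ref{lem:agent_i}, and then ultimate boundedness $\Rightarrow$ almost-global ISS via \cite{angeli2011} combined with the converging-input result of \cite{angel2004}. The only cosmetic difference is your choice of $V=\tfrac12\|\m{b}_i-\m{b}_i^*\|^2$ versus the paper's $V=\tfrac12\sum_{j\in\mc{N}_i}\|\m{b}_i-\m{R}(\alpha_{ij}^*)\m{b}_j\|^2$, which leads to the same bound on $\dot V$.
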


\begin{proof}
We will show that for all $i = 1, \ldots, n$, the agent $i$'s heading will point to $\m{p}^*$ determined by \eqref{eq:p_star} by mathematical induction. For $i=1, 2$, we have proved in Lemmas \ref{lem:agent1} and \ref{lem:agent2} that $\m{b}_1\to \m{b}_1^*,~\m{b}_2 \to \m{b}_2^*$ asymptotically if $\m{b}_1(0) \neq -\m{b}_1^*$. 

Suppose that the claim holds until $i-1~(i \geq 3)$, i.e., the heading of every agent $j \in \{1, 2, \ldots, i-1\}$ asymptotically points toward $\m{p}^*$. We prove that $\m{b}_i = \m{b}_i^*$ is an asymptotically stable equilibrium of the system \eqref{eq:agent-i-f}. Consider the  potential function $V = \frac{1}{2} \sum_{j \in \mc{N}_i} \|\m{b}_i - \m{R}(\alpha_{ij}^*) \m{b}_{j}\|^2$, which is positive definite, continuously differentiable. Further, $V = 0$ if and only if $\m{b}_i = \m{b}_{i}^*$. For any trajectory of \eqref{eq:agent-1-i} with $\m{b}_1(0) \neq -\m{b}_1^*$, we have
\begin{align}
\dot{V} & = (\m{b}_i - \m{b}_i^*)\tran \m{P}_{\m{b}_i} \sum_{j \in \mc{N}_i} \m{R}(\alpha_{ij}^*) \m{b}_j \nonumber\\
& = -\m{b}_i^{*T} \m{P}_{\m{b}_i} \sum_{j \in \mc{N}_i} \m{R}(\alpha_{ij}^*) \m{b}_j^{*} \nonumber\\
& \qquad -\m{b}_i^{*T} \m{P}_{\m{b}_i} \sum_{j \in \mc{N}_i} \m{R}(\alpha_{21}^*)(\m{b}_j - \m{b}_j^*) \nonumber \\
& = -|\mc{N}_i| \m{b}_i^{*T} \m{P}_{\m{b}_i} \m{b}_i^{*} -\m{b}_i^{*T} \m{P}_{\m{b}_i} \sum_{j \in \mc{N}_i} \m{R}(\alpha_{21}^*)(\m{b}_j - \m{b}_j^*) \nonumber \\
& \leq - |\mc{N}_i| \|\m{P}_{\m{b}_i} \m{b}_i^{*}\|^2 + \|\m{P}_{\m{b}_i} \m{b}_i^{*}\| \sum_{j \in \mc{N}_i} \|\m{b}_j - \m{b}_j^*\|. 
\end{align}
Given any $\epsilon > 0$ such that $\|\m{P}_{\m{b}_i} \m{b}_i^*\| > \epsilon$. Since $\m{b}_j \to \m{b}_j^*$ asymptotically $\forall j = 1, \ldots, i-1$, for a small $\epsilon>0$ there exists a time instance such that $\max_{j=1, \ldots, i-1} \|\m{b}_j - \m{b}_j^*\| < \epsilon$. It follows that
\begin{align}
\dot{V} & < - |\mc{N}_i| \|\m{P}_{\m{b}_i} \m{b}_i^{*}\|^2 + \epsilon |\mc{N}_i| \|\m{P}_{\m{b}_i} \m{b}_i^{*}\| \nonumber\\
& < -|\mc{N}_i|\|\m{P}_{\m{b}_i} \m{b}_i^{*}\|(\|\m{P}_{\m{b}_i} \m{b}_i^{*}\| - \epsilon) < 0. \label{eq:agent-i-ultimate}
\end{align}
Equation \eqref{eq:agent-i-ultimate} shows that the system \eqref{eq:agent-i-f} fulfills the ultimate boundedness property \cite{angeli2011}[Proposition 3]. Together with the result in Lemma \ref{lem:agent_i}, it follows that \eqref{eq:cascade-2} is almost globally ISS with regard to the input $\m{b}_{[1:i-1]}$. It follows from the induction assumptions $\m{b}_{[1:i-1]} \to [\m{b}_1^{*T},\ldots,\m{b}_{i-1}^{*T}]$ and \cite{angel2004}[Theorem 2] that the equilibrium $\m{b}_i = \m{b}_i^*$ of \eqref{eq:cascade-2} is almost globally asymptotically stable.

Finally, since the claim holds for any $i \geq 2$, it is true for $i=n$. Therefore, the heading vectors of $n$ agents asymptotically point toward the common point $\m{p}^*$ determined from \eqref{eq:p_star}.
\end{proof}
\subsection{Discussions}
In this subsection, we have two remarks on our proposed approach to solve the pointing consensus problem. 

\textbf{The assumption on the communication graphs}: In this paper, we only study pointing consensus problem with rooted out-branching graphs. The restriction is made because we do not know how to solve the problem for general graphs. The solution of pointing consensus in this case may give some insight to address the problem in general. The assumption on rooted out-branching graph allows us to define the set of desired angles $\alpha_{ij}^*$. Further, the $n$-agent dynamics has form of a cascade structure under this assumption. Without the rooted out-branching assumption, the $n$-agent dynamics has some equilibrium sets that are hard to analyze.

\textbf{The choose of the set points}: 
Suppose the set points, which contain information on the target (heading directions or angles), are sent to the $n$-agent system from a command center located far from the system. For example, one can consider a satellite formation orbiting the Earth and a central ground control station on Earth sending the data. 
The command center knows the exact positions of the agents as well as the desired target point. From Lemma~ \ref{lem:agent1}, if the command center sends each agent $i$ the direction toward a common target, i.e, $\m{b}_{i}^*$, agent $i$ can directly point to ward the target under the control law \eqref{eq:control_law0} without exchanging any information with its neighbors. 

In this paper, the set points are given as a desired heading $\m{b}_1^*$ and some angles $\alpha_{ij}^*$. Thus, each agent ($2, \ldots, n$) blindly follows the heading directions of its neighbors and controls its heading under \eqref{eq:control_law} accordingly. Although our approach is indirect, it is advantageous in terms of security as explained by the following scenario. Suppose that there is an attacker who knows the positions of all agents $\m{p}_i, \forall i,$ and tries to figure out the target's position by decoding the set points sent from the command center. Based on Remark \ref{remark:p_star}, the attacker can compute the target point if he can decode the information of $\m{b}_1^*$ and $\alpha_{21}^*$ from equation \eqref{eq:p_star}. However, if the attacker cannot decode both $\m{b}_1^*$ and $\alpha_{21}^*$ from the sent information, he cannot find the target point from the remaining set points.\footnote{Note that in the example depicted in Fig.~\ref{fig:uniqueness}, the set of desired angles do not give enough information to determine the target.} Meanwhile, if the set points are given as $\{\m{b}_i^*\}_{\{i=1,...,n\}}$, the attacker can locate the target point if any two desired heading vectors are decoded. For example, given $\m{b}_i^*$, $\m{b}_j^*$, the target point can be computed by $\m{p}^* = (\m{P}_{\m{b}_i^*} + \m{P}_{\m{b}_j^*})^{-1} (\m{P}_{\m{b}_i^*}\m{p}_i + \m{P}_{\m{b}_j^*}\m{p}_j)$. 
Therefore, by giving the set points with one desired heading vector and some desired angles allows a higher privacy level on masking  the target. 
\section{SIMULATIONS}
\label{section:4}
\begin{figure}[t!]
\centering
\subfloat[The initial \& final headings]{%
    \includegraphics[height =3.61cm]{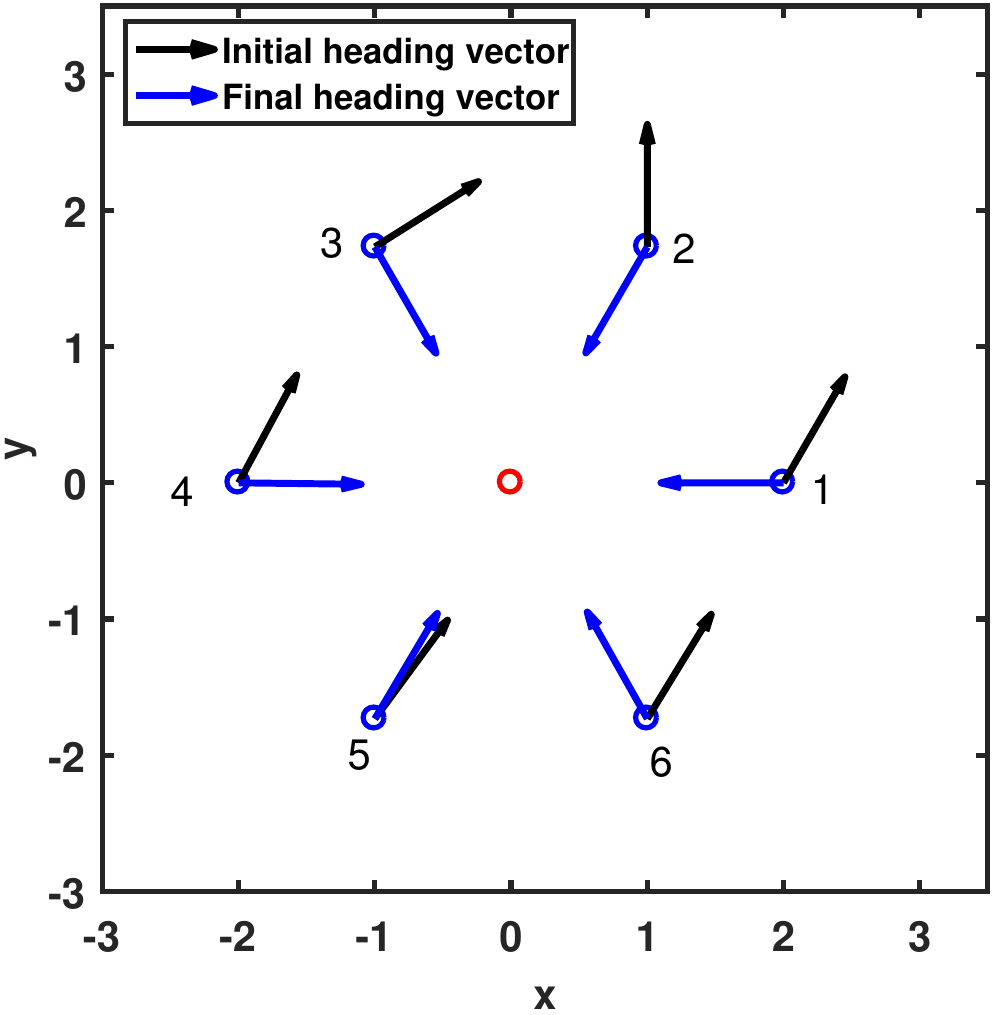}
 	\label{fig:2a}}
 	\quad
\subfloat[Heading vector errors vs Time]{%
    \includegraphics[height =3.61cm]{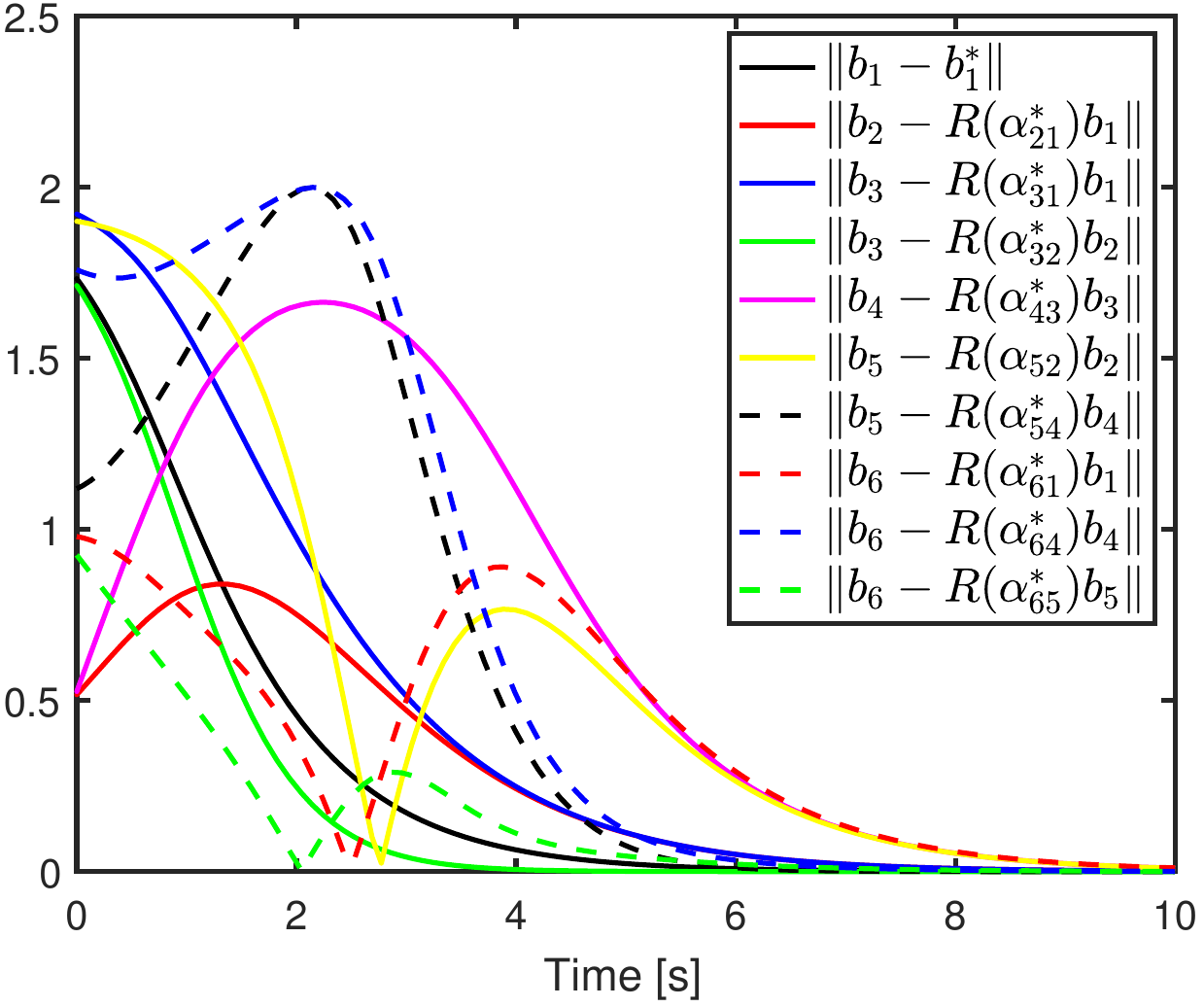}
 	\label{fig:2b}}
\caption{\label{fig:sim} Under the control law \eqref{eq:system}, six agents asymptotically point toward a common point (marked by `\textcolor{red}{o}').}
\end{figure}
Consider a six-agent system with the interaction topology as described by Figure \ref{fig:consensus}.b. Six agents are positioned at the vertices of a regular hexagon: $\m{p}_{1} = [2,0]\tran,~\m{p}_{2} = [1,\sqrt{3}]\tran,~\m{p}_{3} = [-1,\sqrt{3}]\tran,~\m{p}_{4} = [-2,0]\tran,~\m{p}_{5} = [-1,-\sqrt{3}]\tran,~\m{p}_{6} = [1,-\sqrt{3}]\tran$. We aim to control all agents to point toward the origin $[0, 0]\tran$. To this end, the desired heading of agent 1 is set to be $\m{b}_1^* = [-1,~0]\tran$. The desired angles are given as follows: $\alpha_{21}^* = \alpha_{32}^*= \alpha_{43}^* = \alpha_{65}^*= \frac{\pi}{3}$, $\alpha_{31}^* = \alpha_{64}^* = \frac{2\pi}{3}$, and $\alpha_{61}^* = -\frac{\pi}{3}$. 

We simulate the six-agent system under the control law \eqref{eq:system}. The initial heading vectors were randomly chosen. The simulation results are provided in Fig.~\ref{fig:sim}. It can be seen from Fig.~\ref{fig:2a} that all agents' heading vectors gradually point toward the origin. Figure~\ref{fig:2b} shows that the heading vector errors eventually vanish under the control law \eqref{eq:system}. 

Next, we simulate the system with the same initial condition except that the desired heading of agent 1 was changed to $\m{b}_1^* = [-\frac{\sqrt{2}}{2},~\frac{\sqrt{2}}{2}]\tran$, i.e., $\m{b}_1^*$ does not point to the desired target. Simulation results are shown in Fig.~\ref{fig:sim1}. Although all desired angles $\alpha_{ij}^*$ are satisfied (Fig.~\ref{fig:3b}), the agents' headings do not target a common point. Observe from Fig.~\ref{fig:3a} that the intersections of the lines containing the heading directions are vertices of a regular hexagon. 


\section{CONCLUSIONS}
\label{section:5}
In this paper, a framework for studying the pointing consensus problem was formulated. A control strategy was proposed to solve the problem with rooted out-branching graphs. The analysis in this paper was based on mathematical induction and the notion of almost global input-to-state stability theory. 

When the graph is not restricted to directed acyclic graph, there may exist some undesired equilibria that are nontrivial to examine their stability. Studying the pointing consensus with general graphs is a further research direction. It is observed that some relative information on the positions of the agents need to be available to solve this problem completely.


\section*{ACKNOWLEDGMENT}
The work of M. H. Trinh, Q. V. Tran, and H.-S. Ahn was supported by GIST Research Institute (GRI) and by the National Research Foundation (NRF) of Korea under the grant NRF-2017R1A2B3007034.

The work of D. Zelazo was supported in part at the Technion by a fellowship of the Israel Council for Higher Education and the Israel Science Foundation (grant No. 1490/1).

\begin{figure}[t!]
\centering
\subfloat[The initial \& final headings]{%
    \includegraphics[height = 3.61cm]{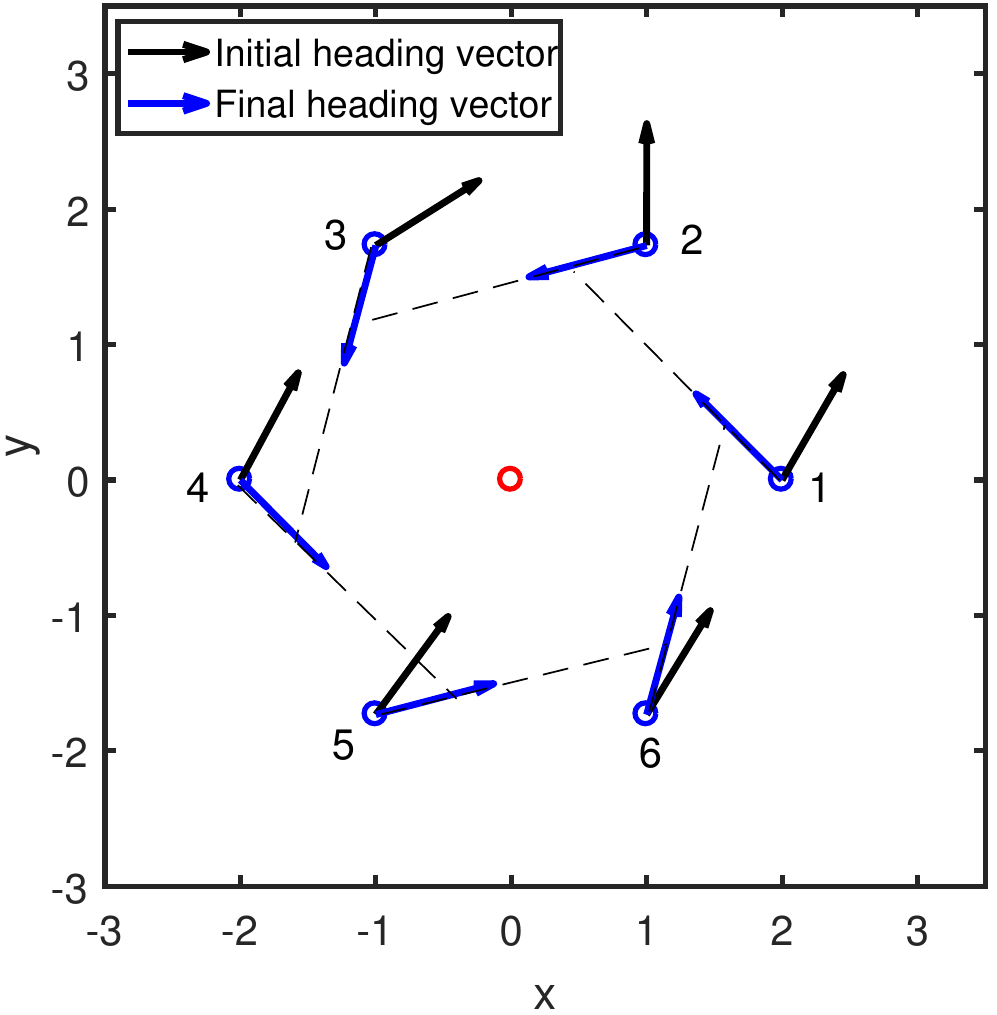}
 	\label{fig:3a}}
 	\quad
\subfloat[Heading vector errors vs Time]{%
    \includegraphics[height = 3.61cm]{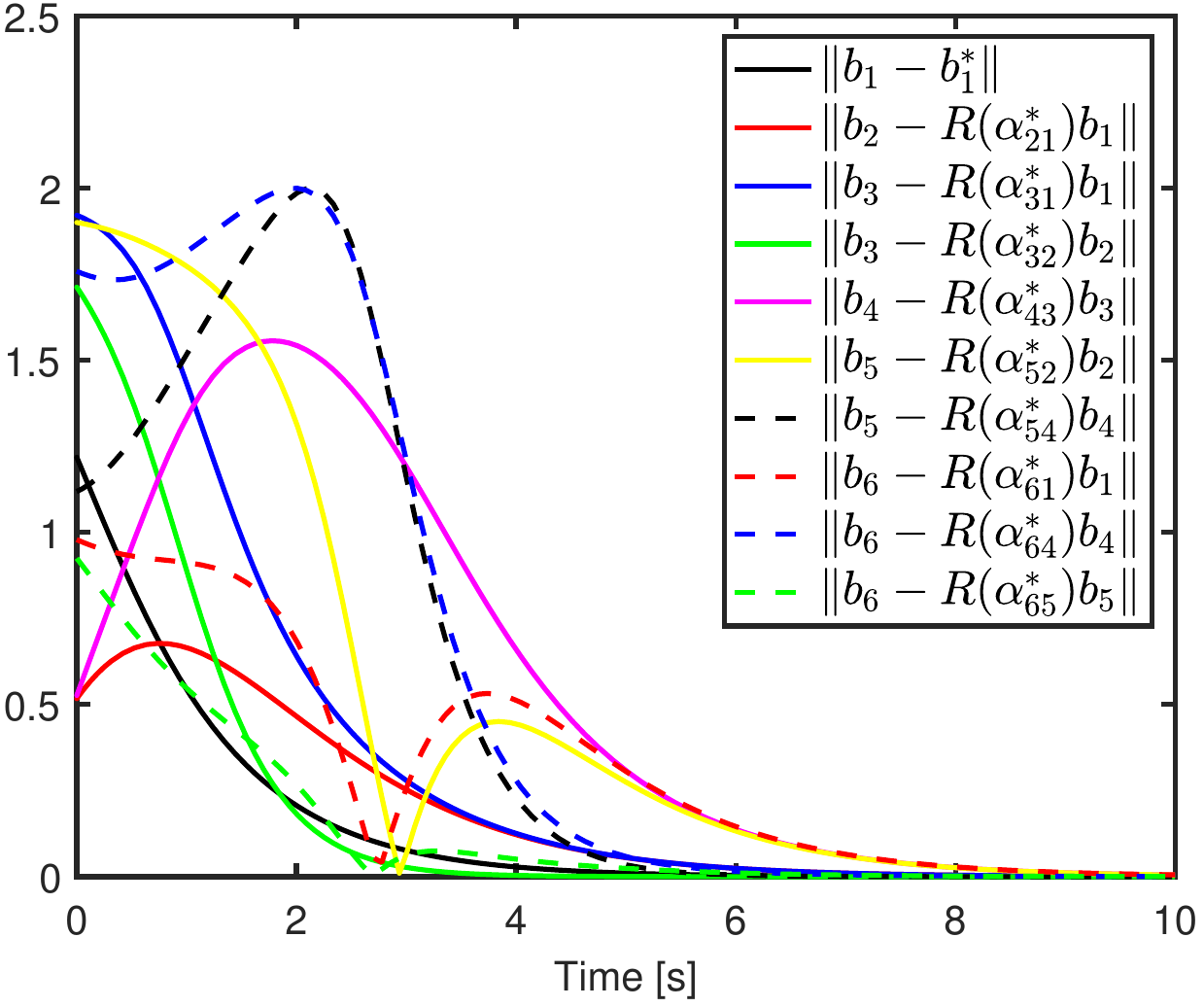}
 	\label{fig:3b}}
\caption{\label{fig:sim1} When agent 1 does not point toward the desired target, six agents do not target a same point.}
\end{figure}

\bibliographystyle{IEEEtranS}
\bibliography{minh2017}     

\begin{thebibliography}{10}
\providecommand{\url}[1]{#1}
\csname url@samestyle\endcsname
\providecommand{\newblock}{\relax}
\providecommand{\bibinfo}[2]{#2}
\providecommand{\BIBentrySTDinterwordspacing}{\spaceskip=0pt\relax}
\providecommand{\BIBentryALTinterwordstretchfactor}{4}
\providecommand{\BIBentryALTinterwordspacing}{\spaceskip=\fontdimen2\font plus
\BIBentryALTinterwordstretchfactor\fontdimen3\font minus
  \fontdimen4\font\relax}
\providecommand{\BIBforeignlanguage}[2]{{%
\expandafter\ifx\csname l@#1\endcsname\relax
\typeout{** WARNING: IEEEtranS.bst: No hyphenation pattern has been}%
\typeout{** loaded for the language `#1'. Using the pattern for}%
\typeout{** the default language instead.}%
\else
\language=\csname l@#1\endcsname
\fi
#2}}
\providecommand{\BIBdecl}{\relax}
\BIBdecl

\bibitem{angel2004}
D.~Angeli, ``An almost global notion of input to state stability,'' \emph{IEEE
  Transactions on Automatic Control}, vol.~6, no.~49, pp. 866--874, 2004.

\bibitem{angeli2011}
D.~Angeli and L.~Praly, ``Stability robustness in the presence of exponentially
  unstable isolated equilibria,'' \emph{IEEE Transactions on Automatic
  Control}, vol.~7, no.~56, pp. 1582--1592, 2011.

\bibitem{Bishop2014}
A.~N. Bishop, M.~Deghat, B.~D.~O. Anderson, and Y.~Hong, ``Distributed
  formation control with relaxed motion requirements,'' \emph{International
  Journal of Robust and Nonlinear Control}, vol.~25, no.~17, pp. 3210--3230,
  2015.

\bibitem{krieger2009earth}
G.~Krieger, H.~Fiedler, and A.~Moreira, ``Earth observation with {SAR}
  satellite formations: new techniques and innovative products,'' in
  \emph{Proceedings of the IAA Symposium on Small Satellites for Earth
  Observation}, 2009.

\bibitem{Loizou2007}
S.~G. Loizou and V.~Kumar, ``Biologically inspired bearing-only navigation and
  tracking,'' in \emph{Proc. of the 46th IEEE Conference on Decision and
  Control (CDC), LA, USA}, 2007, pp. 1386--1391.

\bibitem{MesbahiEgerstedt}
M.~Mesbahi and M.~Egerstedt, \emph{Graph Theoretic Methods in Multiagent
  Networks}.\hskip 1em plus 0.5em minus 0.4em\relax Princeton NJ: Princeton
  University Press, 2010.

\bibitem{moreira2013tutorial}
A.~Moreira, P.~Prats-Iraola, M.~Younis, G.~Krieger, I.~Hajnsek, and K.~P.
  Papathanassiou, ``A tutorial on synthetic aperture radar,'' \emph{IEEE
  Geoscience and remote sensing magazine}, vol.~1, no.~1, pp. 6--43, 2013.

\bibitem{olfati2007consensuspieee}
R.~Olfati-Saber, J.~A. Fax, and R.~M. Murray, ``Consensus and cooperation in
  networked multi-agent systems,'' \emph{Proceedings of the IEEE}, vol.~95,
  no.~1, pp. 215--233, 2007.

\bibitem{ren2007distributed}
W.~Ren, ``Distributed attitude alignment in spacecraft formation flying,''
  \emph{International journal of adaptive control and signal processing},
  vol.~21, no. 2-3, pp. 95--113, 2007.

\bibitem{sarlette2009consensus}
A.~Sarlette and R.~Sepulchre, ``Consensus optimization on manifolds,''
  \emph{SIAM Journal on Control \& Optimization}, vol.~48, pp. 56--76, 2009.

\bibitem{Trinh2016CEP}
M.~H. Trinh, G.-H. Ko, V.~H. Pham, K.-K. Oh, and H.-S. Ahn, ``Guidance using
  bearing-only measurements with three beacons in the plane,'' \emph{Control
  Engineering Practice}, vol.~51, pp. 81--91, 2016.

\bibitem{Minh2016_ifaclss}
M.~H. Trinh, K.-K. Oh, K.~Jeong, and H.-S. Ahn, ``Bearing-only control of
  leader first follower formations,'' in \emph{Proc. of the 14th {IFAC}
  Symposium on Large Scale Complex Systems: {T}heory and {A}pplications}, 2016,
  pp. 7--12.

\bibitem{Trinh2018}
M.~H. Trinh, S.~Zhao, Z.~Sun, D.~Zelazo, B.~D.~O. Anderson, and H.-S. Ahn,
  ``Bearing based formation control of a group of agents with leader-first
  follower structure,'' {T}ransactions on Automatic Control, under review.

\bibitem{zhang2014distributed}
F.~Zhang, P.~Ramazi, and M.~Cao, ``Distributed concurrent targeting for linear
  arrays of point sources,'' \emph{IFAC Proceedings Volumes}, vol.~47, no.~3,
  pp. 8323--8328, 2014, 19th IFAC World Congress.

\bibitem{zhao2015tac}
S.~Zhao and D.~Zelazo, ``Bearing rigidity and almost global bearing-only
  formation stabilization,'' \emph{IEEE Transactions on Automatic Control},
  vol.~61, no.~5, pp. 1255--1268, 2016.

\bibitem{zhao2016aut}
S.~Zhao and D.~Zelazo, ``Localizability and distributed protocols for
  bearing-based network localization in arbitrary dimensions,''
  \emph{Automatica}, vol.~69, pp. 334--341, 2016.

\end{thebibliography}
\addtolength{\textheight}{-12cm}

\end{document}